\numberwithin{equation}{section}
\theoremstyle{plain}
\newtheorem{theorem}{Theorem}[section]
\newtheorem{conjecture}[theorem]{Conjecture}
\theoremstyle{definition}
\newtheorem{Def}[theorem]{Definition}
\newtheorem{remark}[theorem]{Remark}
\newtheorem{?}[theorem]{Problem}
\def\boxit#1{\leavevmode\hbox{\vrule\vtop{\vbox{\kern.33333pt\hrule
    \kern1pt\hbox{\kern1pt\vbox{#1}\kern1pt}}\kern1pt\hrule}\vrule}}
\begin{document}

\title[On a generalized crank]{On a generalized crank for $k$-colored partitions}

\author[S. Fu]{Shishuo Fu}
\address[Shishuo Fu]{College of Mathematics and Statistics, Chongqing University, Huxi Campus LD506, Chongqing 401331, P.R. China}
\email{fsshuo@cqu.edu.cn}

\author[D. Tang]{Dazhao Tang}

\address[Dazhao Tang]{College of Mathematics and Statistics, Chongqing University, Huxi Campus LD208, Chongqing 401331, P.R. China}
\email{dazhaotang@sina.com}

\date{\today}

\begin{abstract}
A generalized crank ($k$-crank) for $k$-colored partitions is introduced. Following the work of Andrews-Lewis and Ji-Zhao, we derive two results for this newly defined $k$-crank. Namely, we first obtain some inequalities between the $k$-crank counts $M_{k}(r,m,n)$ for $m=2,3$ and $4$, then we prove the positivity of symmetrized even $k$-crank moments weighted by the parity for $k=2$ and $3$. We conclude with several remarks on furthering the study initiated here.
\end{abstract}

\subjclass[2010]{11P81, 11P83, 05A17, 05A20}

\keywords{Colored partitions; Partition statistics; Generalized crank; Generalized crank moments; Arithmetic properties.}

\maketitle



\section{Introduction}\label{sec1}
A \emph{partition} \cite{Andr2} $\pi$ of a positive integer $n$ is a finite weakly decreasing sequence of positive integers $\pi_{1}\geq\pi_{2}\geq\cdots\geq\pi_{r}>0$ such that $\sum_{i=1}^{r}\pi_{i}=n$. The $\pi_{i}$ are called the \emph{parts} of the partition. In 1944, Dyson \cite{Dys} defined the \emph{rank} of a partition as the largest part minus the number of parts and then observed that the rank appears to give combinatorial interpretations for Ramanujan's two celebrated partition congruences modulo $5$ and $7$. Unfortunately, Dyson's rank fails to do the same thing for Ramanujan's third partition congruence modulo $11$. Nevertheless, Dyson \cite{Dys} postulated the existence of another partition statistic, which he coined as the \emph{``crank''}, that would similarly explain Ramanujan's third partition
congruence. In 1988, Andrews and Garvan \cite{A-G} finally captured Dyson's elusive crank of partitions, motivated by the crank of certain vector partitions, which was first studied by Garvan\cite{Gar1}. In an earlier paper \cite{FT} we considered, in the same vein, two families $\overline{r}$ and $r^*$ of multiranks for multipartitions, multi-overpartitions and multi-pods that lead to similar combinatorial interpretations for congruence properties enjoyed by these types of partitions. We introduce here yet a third family of generalized cranks for multipartitions, which apparently do not possess any significance in explaining congruences but, as we are going to demonstrate, have in common several neat distributional properties with the Andrews-Garvan-Dyson crank.

Our starting point is a recent paper by Bringmann and Dousse \cite{BD}, in which they settled a longstanding conjecture by Dyson \cite[p.~172]{Dys2} concerning the limiting shape of the crank generating function. More precisely, they considered a family of inverse theta functions defined for $k\in\mathbb{N}$ by
\begin{align}\label{gf:k colored part crank}
\mathcal{C}_{k}(z,q)=\sum_{n=0}^{\infty}\sum_{m=-\infty}^{\infty}M_{k}(m,n)z^{m}q^{n}:=\dfrac{(q;q)_{\infty}^{2-k}}{(zq;q)_{\infty}(z^{-1}q;q)_{\infty}}.
\end{align}

Here and in what follows, we adopt the following customary notations \cite{Andr2} in partitions and $q$-series:
\begin{align*}
(a;q)_{j} &:=\prod_{i=0}^{j-1}(1-aq^{i}), \quad j\in\mathbb{N}_0\cup\{\infty\}.
\end{align*}

Since $P(q):=(q;q)_{\infty}^{-1}$ is the generating function for the ordinary partitions, the product side of \eqref{gf:k colored part crank} suggests the following definition.
\begin{Def}\label{k-crank}
For $k\geq 2$, the generalized crank (abbreviated as $k$-crank in what follows) of a $k$-colored partition $\overrightarrow{\pi}=(\pi^{(1)},\pi^{(2)},\cdots,\pi^{(k)})$ is defined as
\begin{align}\label{generalized crank}
\textrm{$k$-crank}(\overrightarrow{\pi})=\ell(\pi^{(1)})-\ell(\pi^{(2)}),
\end{align}
where $\ell(\pi^{(i)})$ denotes the number of parts in $\pi^{(i)}$.
\end{Def}

\begin{remark}
We note that the special case $k=2$ yields the Hammond-Lewis birank \cite{HL} of $2$-colored partitions, and the case $k=3$ corresponds to the authors' multirank $r^{*}$ for $3$-colored partitions \cite{FT}. For $k>3$, the $k$-crank defined by \eqref{generalized crank} appears to be new.
\end{remark}

In view of \eqref{gf:k colored part crank}, Definition~\ref{k-crank} and the generating function for crank \eqref{gf:ordinary pa}, it is clear that for $k\geq 2$, $M_{k}(m,n)$ (resp.~$M_1(m,n)$) enumerates the number of $k$-colored partitions (resp.~ordinary partitions) of $n$ with $k$-crank (resp. crank) equals $m$. By convention, we simply write $M(m,n)$ for $M_1(m,n)$. Furthermore, we denote $M_{k}(r,m,n)$ (resp. $M(r,m,n)$) by the number of $k$-colored partitions (resp.~ordinary partitions) of $n$ with $k$-crank (resp. crank) congruent to $r$ modulo $m$.

Given a $k$-colored partition $\overrightarrow{\pi}=(\pi^{(1)},\pi^{(2)},\cdots,\pi^{(k)})$ with crank equals, say $m$, we can pair with it a $k$-colored partition with crank equals $-m$, simply by swapping $\pi^{(1)}$ and $\pi^{(2)}$. This observation immediately establishes the symmetry for $k$-crank:
\begin{align}\label{sym-kcrank}
M_k(m,n)=M_k(-m,n).
\end{align}
This parallels the symmetry for the crank of ordinary partitions, which is not so obvious due to its asymmetric definition (see \cite{BG} for a direct combinatorial proof).

The generating function for $M(m,n)$ was given in \cite{A-G,Gar1}:
\begin{align}\label{gf:ordinary pa}
\mathcal{C}_1(z,q)=\mathcal{C}(z,q) &=\sum_{n=0}^{\infty}\sum_{m=-\infty}^{\infty}M(m,n)z^{m}q^{n}=\dfrac{(q;q)_{\infty}}{(zq;q)_{\infty}(z^{-1}q;q)_{\infty}}.
\end{align}
Putting $z=-1$ in \eqref{gf:ordinary pa} gives
\begin{align*}
\sum_{n=0}^{\infty}\left(M(0,2,n)-M(1,2,n)\right)q^{n} &=\dfrac{(q;q)_{\infty}}{(-q;q)_{\infty}^{2}},
\end{align*}
whose coefficient $M(0,2,n)-M(1,2,n)$ alternates in sign, a fact that was first observed by Andrews and Lewis.
\begin{theorem}[Theorem~1 in \cite{AL}]\label{parity results}
For all $n\geq0$,
\begin{align*}
M(0,2,2n) &>M(1,2,2n),\\
M(1,2,2n+1) &>M(0,2,2n+1).
\end{align*}
\end{theorem}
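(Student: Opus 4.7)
The goal is to show that
$$f(q) := \frac{(q;q)_\infty}{(-q;q)_\infty^2} = \sum_{n\geq 0}\bigl(M(0,2,n)-M(1,2,n)\bigr)q^n$$
has alternating coefficients, in the precise sense that $(-1)^n[q^n]f(q) > 0$ for every $n\geq 0$. Picking out the even- and odd-indexed coefficients then yields the two inequalities of the theorem, so equivalently I plan to show that $f(-q)$ is a power series all of whose coefficients are strictly positive.

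The first step is to rewrite $f(q)$ so that the internal sign cancellations are exposed. Euler's identity $(-q;q)_\infty = 1/(q;q^2)_\infty$ gives $f(q) = (q;q)_\infty (q;q^2)_\infty^2$, and splitting $(q;q)_\infty = (q;q^2)_\infty (q^2;q^2)_\infty$ isolates the theta factor $(q;q^2)_\infty^2(q^2;q^2)_\infty$. The key identity is the Jacobi triple product specialized at $z=-1$,
$$(q;q^2)_\infty^2(q^2;q^2)_\infty = \sum_{n\in\mathbb{Z}}(-1)^n q^{n^2},$$
leaving the compact form $f(q)=(q;q^2)_\infty\sum_{n\in\mathbb{Z}}(-1)^n q^{n^2}$.

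Next I would substitute $q\mapsto -q$. Since $(-1)^{n^2}=(-1)^n$, the theta sum becomes $\sum_{n\in\mathbb{Z}}q^{n^2}=1+2\sum_{k\geq 1}q^{k^2}$, while the product becomes $(-q;q^2)_\infty$, the generating function for partitions into distinct odd parts. Both factors now have non-negative coefficients, so $[q^n]f(-q)\geq 0$. For the strict inequality, let $c_n$ count partitions of $n$ into distinct odd parts; a direct check (taking $\{n\}$ for odd $n$, and $\{1,n-1\}$ for even $n\geq 4$) shows $c_n\geq 1$ for every $n\geq 0$ except $c_2=0$. Since
$$[q^n]f(-q) = c_n + 2\sum_{\substack{k\geq 1 \\ k^2\leq n}}c_{n-k^2},$$
the only potentially delicate case is $n=2$, where the $k=1$ contribution $2c_1=2$ settles it.

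The main obstacle in this plan is spotting the correct refactorization; once one recognizes that $(q;q^2)_\infty^2(q^2;q^2)_\infty$ is precisely Jacobi's alternating theta series, the sign-tracking under $q\mapsto -q$ and the verification of positivity for two familiar products are both routine.
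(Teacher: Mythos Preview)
Your argument is correct. The refactorization $f(q)=(q;q^2)_\infty\cdot(q;q^2)_\infty^2(q^2;q^2)_\infty$, the identification of the second factor as $\sum_{n\in\mathbb{Z}}(-1)^nq^{n^2}$ via Jacobi's triple product, the sign analysis under $q\mapsto -q$, and the check that $[q^n]f(-q)>0$ (including the isolated case $n=2$) are all sound.

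As for comparing with the paper: this theorem is \emph{quoted}, not proved, in the present paper---it is stated as Theorem~1 of Andrews--Lewis \cite{AL} and serves as motivation for the $k$-crank analogues. So there is no in-paper proof to compare against. That said, your method is exactly the template the authors use in their proof of Theorem~\ref{thm:alternate thm} for $k$-crank: specialize $z=-1$ in the crank generating function, then show $f(-q)$ has positive coefficients by rewriting it as a manifestly positive product or sum. For $k=2,3,4$ the authors get away with a pure product (e.g.\ $(-q;q^2)_\infty^2$ for $k=2$), whereas the ordinary-crank case you treat genuinely requires the extra theta factor and the small case analysis at $n=2$, which is precisely why Andrews--Lewis's original result is a bit more delicate than its $k$-colored cousins.
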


Following the work of Andrews and Lewis, we study the $k$-crank of $k$-colored partitions modulo 2, 3, 4 and obtain comparable results. The following is the $m=2$ case.
\begin{theorem}\label{thm:alternate thm}
For $n\geq0$,
\begin{align}
M_{k}(0,2,2n) &>M_{k}(1,2,2n),\quad~\emph{for}~k=2,3,4,\label{analog ine1}\\
M_{k}(1,2,2n+1) &>M_{k}(0,2,2n+1),\quad~\emph{for}~k=2,3,\label{analog ine2}\\
M_{4}(0,2,2n+1) &= M_{4}(1,2,2n+1), \label{strict iden1}\\
M_{k}(0,2,n) &> M_{k}(1,2,n), \quad \emph{for}~k\geq5.\label{analog ine3}
\end{align}
\end{theorem}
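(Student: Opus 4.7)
The plan is to set $z=-1$ in the generating function \eqref{gf:k colored part crank} and then analyze the sign of the resulting coefficients case by case. Using $(-q;q)_{\infty}=1/(q;q^{2})_{\infty}$ together with $(q;q^{2})_{\infty}=(q;q)_{\infty}/(q^{2};q^{2})_{\infty}$, it collapses to
\begin{align*}
\sum_{n\geq 0}\bigl(M_{k}(0,2,n)-M_{k}(1,2,n)\bigr)q^{n}=\frac{(q;q)_{\infty}^{4-k}}{(q^{2};q^{2})_{\infty}^{2}},
\end{align*}
so the four inequalities reduce to identifying the sign of the coefficients of this series for $k=2,3,4,\geq 5$.

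The cases $k=4$ and $k\geq 5$ are essentially immediate. For $k=4$ the right side is $1/(q^{2};q^{2})_{\infty}^{2}$, a series in $q^{2}$ with coefficient of $q^{2n}$ equal to $\sum_{j=0}^{n}p(j)p(n-j)>0$, yielding \eqref{strict iden1} and the $k=4$ instance of \eqref{analog ine1}. For $k\geq 5$ the right side is a product of reciprocals of infinite Euler products and counts $(k-2)$-tuples of partitions (with two of them restricted to even parts); its coefficient of $q^{n}$ is positive for every $n\geq 0$, establishing \eqref{analog ine3}.

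For $k=2$, I would invoke Jacobi's triple product in the form $(q;q)_{\infty}^{2}/(q^{2};q^{2})_{\infty}=\sum_{n\in\mathbb{Z}}(-1)^{n}q^{n^{2}}$, then multiply by $1/(q^{2};q^{2})_{\infty}=\sum_{m\geq 0}p(m)q^{2m}$. Since $n^{2}$ has the same parity as $n$, the coefficient of $q^{2M}$ receives contributions only from even $n$ (each with sign $+1$) and is at least $p(M)>0$, whereas the coefficient of $q^{2M+1}$ receives contributions only from odd $n$ (each with sign $-1$) and is at most $-2p(M)<0$. This settles \eqref{analog ine1} and \eqref{analog ine2} for $k=2$.

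The main obstacle is $k=3$, where the right side becomes $(q;q)_{\infty}/(q^{2};q^{2})_{\infty}^{2}=(q;q^{2})_{\infty}/(q^{2};q^{2})_{\infty}$ and exhibits neither positivity nor any obvious alternating pattern. The key step is to recognize Gauss' identity $\psi(q):=\sum_{n\geq 0}q^{n(n+1)/2}=(q^{2};q^{2})_{\infty}/(q;q^{2})_{\infty}$, which rewrites the generating function as $1/\psi(q)$. Replacing $q$ by $-q$ then gives
\begin{align*}
\frac{1}{\psi(-q)}=\frac{(-q;q^{2})_{\infty}}{(q^{2};q^{2})_{\infty}},
\end{align*}
whose $n$-th coefficient $d_{n}$ counts pairs consisting of a partition of some $j$ into distinct odd parts and a partition of $n-j$ into even parts; one checks that $d_{n}\geq 1$ for every $n\geq 0$ (take the odd part $(n)$ if $n$ is odd, the even part $(n)$ if $n$ is a positive even number, or the empty pair if $n=0$). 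Undoing the substitution yields $1/\psi(q)=\sum_{n\geq 0}(-1)^{n}d_{n}q^{n}$, precisely the alternating sign pattern needed to finish \eqref{analog ine1} and \eqref{analog ine2} for $k=3$.
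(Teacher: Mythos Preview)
Your proof is correct and follows the same overall strategy as the paper: set $z=-1$ in \eqref{gf:k colored part crank} and analyze the sign of the coefficients case by case, with the cases $k=3,4$ and $k\ge 5$ handled identically to the paper (the paper, like you, reduces $k=3$ to $(q;q^{2})_{\infty}/(q^{2};q^{2})_{\infty}$ and then substitutes $q\mapsto -q$). The only real difference is at $k=2$: the paper simply observes that replacing $q$ by $-q$ in $1/(-q;q)_{\infty}^{2}$ yields the manifestly positive product $(-q;q^{2})_{\infty}^{2}$, whereas you invoke the theta identity $(q;q)_{\infty}^{2}/(q^{2};q^{2})_{\infty}=\sum_{n\in\mathbb{Z}}(-1)^{n}q^{n^{2}}$ and then argue parity of $n^{2}$. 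Both arguments are valid; the paper's is more elementary (no Jacobi triple product needed), while yours gives an explicit lower bound on the magnitude of the difference in terms of $p(M)$.
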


The rest of the paper is organized as follows. In Section~\ref{sec:2}, we establish some inequalities between $k$-cranks of $k$-colored partitions modulo 2, 3, and 4. Next in Section~\ref{sec:3} we generalize \eqref{analog ine1} and \eqref{analog ine2} further by introducing the symmetrized $k$-crank moments . We conclude in the last section with some remarks and one conjecture on the unimodality of $M_k(m,n)$.

\section{$k$-crank modulo 2, 3, and 4}\label{sec:2}

\subsection{The case $m=2$}

\begin{proof}[Proof of Theorem \ref{thm:alternate thm}]
Taking $k=2, z=-1$ in \eqref{gf:k colored part crank} gives
\begin{align}
\sum_{n=0}^{\infty}\left(M_{2}(0,2,n)-M_{2}(1,2,n)\right)q^{n} &=\dfrac{1}{(-q;q)_{\infty}^{2}}:=f(q),\label{gf:f}
\end{align}
say, then we have to show that the coefficient of $q^{n}$ in \eqref{gf:f} is positive/negative according to whether $n$ is even or odd. In other words, we need to prove that the coefficients of $f(-q)$ are all positive. Since
\begin{align*}
f(-q) &=\dfrac{1}{(q;-q)_{\infty}^{2}}=\dfrac{(-q;q)_{\infty}^{2}}{(-q^{2};q^{2})_{\infty}^{2}}=(-q;q^{2})_{\infty}^{2},
\end{align*}
then the coefficients of $f(-q)$ are all positive. Actually $f(-q)$ is the generating function of pairs of partitions into distinct odd parts. This gives us the $k=2$ case of \eqref{analog ine1} and \eqref{analog ine2}.

Similariy, taking $k=3, z=-1$ in \eqref{gf:k colored part crank} leads to
\begin{align*}
\sum_{n=0}^{\infty}\left(M_{3}(0,2,n)-M_{3}(1,2,n)\right)q^{n} &=\dfrac{1}{(-q;q)_{\infty}^{2}(q;q)_{\infty}}=\dfrac{(q;q^{2})_{\infty}}{(q^{2};q^{2})_{\infty}},
\end{align*}
which analogously gives us the $k=3$ case of \eqref{analog ine1} and \eqref{analog ine2}.

For $k\geq4$, we have
\begin{align}\label{gf3:2-colored pa}
\sum_{n=0}^{\infty}\left(M_{k}(0,2,n)-M_{k}(1,2,n)\right)q^{n} &=\dfrac{1}{(-q;q)_{\infty}^{2}(q;q)_{\infty}^{k-2}}=\dfrac{1}{(q^{2};q^{2})_{\infty}^{2}(q;q)_{\infty}^{k-4}}.
\end{align}
If $k=4$, we get \eqref{strict iden1} and the $k=4$ case of \eqref{analog ine1}, since the power of $q$ in \eqref{gf3:2-colored pa} must be even. When $k\geq5$, \eqref{analog ine3} is obvious since \eqref{gf3:2-colored pa} contains the factor $1/(q;q)_{\infty}$. This completes the proof.
\end{proof}

\subsection{The case $m=3$}
In the same paper \cite{AL}, Andrews and Lewis proposed the following conjecture, which was first solved by Kane \cite{Kan} and reproved in a more systematic setting by Kim \cite{Kim} later.
\begin{theorem}\label{Kane thm}
For $n\geq0$,
\begin{align*}
M(0,3,3n) &>M(1,3,3n),\\
M(0,3,3n+1) &<M(1,3,3n+1),\\
M(0,3,3n+2) &<M(1,3,3n+2),\quad \emph{for }n\neq1,4,5,\\
M(0,3,3n+2) &=M(1,3,3n+2),\quad \emph{for }n=4,5.
\end{align*}
\end{theorem}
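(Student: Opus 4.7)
The plan is to convert the three assertions into statements about the coefficients of a single explicit $q$-series. Setting $z=\zeta:=e^{2\pi i/3}$ in \eqref{gf:ordinary pa}, using the factorization
\[
(1-\zeta q^n)(1-\zeta^{-1}q^n) = 1+q^n+q^{2n} = \frac{1-q^{3n}}{1-q^n}
\]
to simplify the denominator, and invoking the crank symmetry $M(m,n)=M(-m,n)$ (so that $M(1,3,n)=M(2,3,n)$, and the coefficient of $M(1,3,n)$ on the left becomes $\zeta+\zeta^{-1}=-1$), one arrives at the compact identity
\[
\sum_{n=0}^{\infty}\bigl(M(0,3,n)-M(1,3,n)\bigr)\,q^n \;=\; \frac{(q;q)_\infty^2}{(q^3;q^3)_\infty} \;=:\; F(q).
\]

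The theorem is then equivalent to a sign pattern on the three components in the 3-dissection $F(q)=F_0(q^3)+qF_1(q^3)+q^2F_2(q^3)$: namely, $F_0$ should have all positive coefficients, $F_1$ all negative coefficients, and $F_2$ all negative coefficients with the sole exceptions $[q^4]F_2=[q^5]F_2=0$ (the equality cases $n=14,17$) and $[q^1]F_2>0$ (the single anomaly $n=1$ that the theorem excludes from the third line). My first attempt would be to obtain closed forms for $F_0,F_1,F_2$ from the classical 3-dissection of $(q;q)_\infty$---e.g.~Ramanujan's cubic identity expressing $\eta(\tau)$ via $\eta(3\tau)$, $\eta(9\tau)$, and a cubic theta function---and then squaring and dividing by $(q^3;q^3)_\infty$. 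If the resulting expressions happen to factor as eta quotients whose coefficient-positivity is manifest, or admit a direct combinatorial interpretation via partition identities, the first two inequalities would follow immediately.

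The main obstacle, I expect, is that such clean product forms are unlikely to cover all three pieces simultaneously; the exceptional vanishings $[q^4]F_2=[q^5]F_2=0$ alongside $[q^1]F_2>0$ strongly suggest that no single bijection can describe the behavior of $F_2$ uniformly. The fallback is the analytic route taken by Kane: treat $F(q)$ as (essentially) a weakly holomorphic modular form on $\Gamma_0(3)$, apply Wright's version of the circle method to extract the leading asymptotics of $[q^n]F_i(q)$, of the shape $c_i\,n^{\alpha_i}\,e^{\beta\sqrt{n}}$, with explicit error estimates, and thereby establish the claimed sign for all $n$ beyond some effective threshold $N_0$. The finitely many residual cases $n<N_0$, including the two equality instances at $n=14,17$ and the anomaly at $n=5$, are then confirmed by direct computation. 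The truly delicate point is sharpening the error bound sufficiently that $n=14,17$ emerge as the \emph{only} exceptions in the $3n+2$ progression past the initial anomaly; this is where the heavy modular-forms machinery is genuinely needed.
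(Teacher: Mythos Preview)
The paper does not give its own proof of this statement. Theorem~\ref{Kane thm} is quoted from the literature: it was conjectured by Andrews and Lewis, proved by Kane via an analytic (circle-method/modular-forms) argument, and later reproved by Kim in a general framework on sign-periodicity of Fourier coefficients of eta quotients. So there is nothing in the present paper to compare your attempt against.

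That said, your outline is accurate and well-aimed relative to the existing proofs. The key identity
\[
\sum_{n\ge 0}\bigl(M(0,3,n)-M(1,3,n)\bigr)q^n=\frac{(q;q)_\infty^2}{(q^3;q^3)_\infty}
\]
is exactly the starting point used in the literature, and your diagnosis---that the exceptional values $n=1,4,5$ in the $3n+2$ class preclude any purely combinatorial or product-form argument for $F_2$, forcing an analytic asymptotic approach with explicit error terms followed by a finite check---is precisely why Kane's proof looks the way it does. What you have written, however, is a plan rather than a proof: the entire substance lies in carrying out the effective asymptotics so that the threshold $N_0$ is computable and small. If you intend this as more than a reference to Kane's work, that analytic computation must actually be supplied; otherwise, a citation to \cite{Kan} or \cite{Kim} (as the paper does) is the appropriate move.
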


In contrast, we have the following result when considering $k$-crank modulo $3$.
\begin{theorem}\label{+--thm}
For all $n\geq0$,
\begin{align}
\label{ine:2mod3 3n}M_{2}(0,3,3n) &>M_{2}(1,3,3n),\\
\label{ine:2mod3 3n+1}M_{2}(0,3,3n+1) &<M_{2}(1,3,3n+1),\\
\label{ine:2mod3 3n+2}M_{2}(0,3,3n+2) &<M_{2}(1,3,3n+2),\\
\label{ine:3mod3 >}M_{3}(0,3,3n) &> M_{3}(1,3,3n),\\
\label{ine:3mod3 =}M_{3}(0,3,3n+1) &= M_{3}(1,3,3n+1),\; M_{3}(0,3,3n+2) = M_{3}(1,3,3n+2),\\
\label{ine:kmod3}M_{k}(0,3,n) &> M_{k}(1,3,n),  \quad \emph{for}~k\geq4.
\end{align}
\end{theorem}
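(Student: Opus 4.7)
\textbf{The plan} is to apply a standard cube-root-of-unity dissection to the $k$-crank generating function. By the symmetry \eqref{sym-kcrank}, one has $M_k(1,3,n)=M_k(2,3,n)$, so setting $z=\zeta:=e^{2\pi i/3}$ in \eqref{gf:k colored part crank} gives
\[
\mathcal{C}_k(\zeta,q)=\sum_{n\geq 0}\bigl(M_k(0,3,n)-M_k(1,3,n)\bigr)q^n.
\]
Using $\zeta+\zeta^{-1}=-1$, we have $(\zeta q;q)_\infty (\zeta^{-1}q;q)_\infty=\prod_{j\geq 1}(1+q^j+q^{2j})=(q^3;q^3)_\infty/(q;q)_\infty$, and therefore
\[
\mathcal{C}_k(\zeta,q)=\frac{(q;q)_\infty^{3-k}}{(q^3;q^3)_\infty}.
\]
The three regimes $k=2$, $k=3$, and $k\geq 4$ will be read off by inspecting the coefficient signs of this series.

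The cases $k\geq 3$ are immediate. For $k=3$, $\mathcal{C}_3(\zeta,q)=1/(q^3;q^3)_\infty$ is a positive series in $q^3$, simultaneously yielding \eqref{ine:3mod3 >} and \eqref{ine:3mod3 =}. For $k\geq 4$, the formula $\mathcal{C}_k(\zeta,q)=1/\bigl[(q;q)_\infty^{k-3}(q^3;q^3)_\infty\bigr]$ presents an inverse Euler product with every coefficient strictly positive, proving \eqref{ine:kmod3}.

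The main case is $k=2$, where $\mathcal{C}_2(\zeta,q)=(q;q)_\infty/(q^3;q^3)_\infty$ and we must establish the sign pattern $+,-,-$ at residues $0,1,2\pmod 3$, respectively. For this I would invoke the classical 3-dissection of Euler's function, obtained by splitting the pentagonal number series $\sum_{n\in\mathbb{Z}}(-1)^n q^{n(3n-1)/2}$ according to $n\bmod 3$ and recognising each of the three resulting theta sums as an infinite product via Jacobi's triple product:
\begin{align*}
(q;q)_\infty &= (q^{12};q^{27})_\infty(q^{15};q^{27})_\infty(q^{27};q^{27})_\infty \\
&\quad - q\,(q^6;q^{27})_\infty(q^{21};q^{27})_\infty(q^{27};q^{27})_\infty \\
&\quad - q^2(q^3;q^{27})_\infty(q^{24};q^{27})_\infty(q^{27};q^{27})_\infty.
\end{align*}
After dividing by $(q^3;q^3)_\infty$ and cancelling Euler factors, each of the three residue pieces of $\mathcal{C}_2(\zeta,q)$ becomes, up to its leading sign, an inverse product of the form $1/\prod_{j\in S}(q^{3j};q^{27})_\infty$ for a six-element subset $S\subset\{1,\dots,8\}$, and so has non-negative coefficients. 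The predicted signs $+,-,-$ then give \eqref{ine:2mod3 3n}--\eqref{ine:2mod3 3n+2}. The principal technical obstacle is setting up the 3-dissection correctly and matching each theta sum with its product form via Jacobi's triple product; once this is in hand, the positivity of each of the three inverse-Euler quotients is transparent.
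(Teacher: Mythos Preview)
Your approach is essentially identical to the paper's: both substitute $z=e^{2\pi i/3}$ to obtain $\mathcal{C}_k(\zeta,q)=(q;q)_\infty^{3-k}/(q^3;q^3)_\infty$, dispose of $k\ge 3$ by inspection of the inverse product, and for $k=2$ invoke the very same $3$-dissection $J_1/J_3=J_{12,27}/J_3-q\,J_{6,27}/J_3-q^2 J_{3,27}/J_3$ (the paper cites Berndt; you rederive it from the pentagonal series and Jacobi's triple product). Your additional remark that each $J_{a,27}/J_3$ is an inverse product over a six-element set of residues mod~$27$ makes the non-negativity of each piece explicit, which the paper leaves tacit.

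One loose end, shared with the paper: non-negativity of the three pieces yields only weak inequalities, not the strict ones stated. In fact the residue-$2$ piece $J_{3,27}/J_3=\prod_{j\in\{2,3,4,5,6,7\}}(q^{3j};q^{27})_\infty^{-1}$ has $[q^{3}]=0$ (the part set $\{6,9,12,15,18,21\}$ cannot represent~$3$), so the coefficient of $q^{5}$ in $\mathcal{C}_2(\zeta,q)$ vanishes and $M_2(0,3,5)=M_2(1,3,5)$; this is consistent with the values in Table~\ref{Tab1}. Thus \eqref{ine:2mod3 3n+2} needs the exception $n\neq 1$, and your argument (like the paper's) should either note this or justify strict positivity for all remaining $n$.
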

\begin{proof}
We first note that assuming $k=2, z=e^{2\pi i/3}$ in \eqref{gf:k colored part crank} gives
\begin{align}
\sum_{n=0}^{\infty}\left(M_{2}(0,3,n)-M_{2}(1,3,n)\right)q^{n} &=\dfrac{(q;q)_{\infty}}{(q^{3};q^{3})_{\infty}}=(q;q^{3})_{\infty}(q^{2};q^{3})_{\infty}.\label{inf Bowern conj}
\end{align}
To prove \eqref{ine:2mod3 3n}, \eqref{ine:2mod3 3n+1} and \eqref{ine:2mod3 3n+2}, we only need to show that the signs of coefficients of $q^{3n}$, $q^{3n+1}$ and $q^{3n+2}$ in \eqref{inf Bowern conj} are ``$+--$''. To that end, we consider the following $3$-dissection:
\begin{align*}
\dfrac{(q;q)_{\infty}}{(q^{3};q^{3})_{\infty}}=\dfrac{J_{1}}{J_{3}}&=\dfrac{J_{12,27}}{J_{3}}-q\dfrac{J_{6,27}}{J_{3}}-q^{2}\dfrac{J_{3,27}}{J_{3}},
\end{align*}
where $J_{s}:=(q^{s};q^{s})_{\infty}$ and $J_{s,t}:=(q^{s};q^{t})_{\infty}(q^{t-s};q^{t})_{\infty}(q^{t};q^{t})_{\infty}$ for $1\leq s < t$.
This is a simple consequence of the Jacobi Triple Product identity, see for example \cite[p.~48, Entry~31]{Ber} for a proof. Next for $k\geq 3$, we also set $z=e^{2\pi i/3}$ in \eqref{gf:k colored part crank} to have
\begin{align}\label{gf:mod3 identity}
\sum_{n=0}^{\infty}(M_{k}(0,3,n)-M_{k}(1,3,n))q^{n} &=\dfrac{1}{(q^{3};q^{3})_{\infty}(q;q)_{\infty}^{k-3}}.
\end{align}

If $k=3$, we have \eqref{ine:3mod3 >} and \eqref{ine:3mod3 =} since the power of $q$ in \eqref{gf:mod3 identity} is always divisible by $3$. When $k\geq4$, note that \eqref{gf:mod3 identity} contains the factor $1/(q;q)_{\infty}$, thus we have \eqref{ine:kmod3}.
\end{proof}

\begin{remark}
Two remarks on Theorem~\ref{+--thm} are in order. First note that \eqref{ine:2mod3 3n}--\eqref{ine:2mod3 3n+2} can be viewed as the infinite version of the so-called ``First Borwein conjecture'' \cite[(1.1)]{Andr4}, and can be deduced from Theorem~2.1 in \cite{Andr4}. Secondly, Chan and Mao obtained an improvement \cite[Corollary 1.8]{CM} on Theorem \ref{Kane thm}. It is then natural to ask if there exists an analogous improvement on Theorem \ref{+--thm}.
\end{remark}

\subsection{The case $m=4$}
Andrews and Lewis obtained the following inequalities of crank modulo 4.
\begin{theorem}[Theorem~3 in \cite{AL}]
For $n>0$,
\begin{align*}
M(0,4,2n) &>M(1,4,2n), \quad~\emph{for}~n\neq1,\\
M(0,4,2n-1) &<M(1,4,2n-1), \quad~\emph{for}~n\neq2,\\
M(2,4,2n) &>M(1,4,2n),\\
M(2,4,2n-1) &<M(1,4,2n-1).
\end{align*}
\end{theorem}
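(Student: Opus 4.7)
My plan is to reduce the four inequalities to sign patterns of two explicit infinite products, obtained by evaluating the crank generating function $\mathcal{C}(z,q)$ at fourth roots of unity, and then exploit the mod~$2$ result (Theorem~\ref{parity results}) to control the main term.

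By the crank symmetry $M(m,n) = M(-m,n)$ one has $M(1,4,n) = M(3,4,n)$, so substituting $z = i$ into \eqref{gf:ordinary pa} collapses the imaginary part and gives
\begin{align*}
\sum_{n \geq 0}\bigl(M(0,4,n) - M(2,4,n)\bigr) q^n = \mathcal{C}(i,q) = \frac{(q;q)_{\infty}}{(iq;q)_{\infty}(-iq;q)_{\infty}} = \frac{(q;q)_{\infty}}{(-q^{2};q^{2})_{\infty}},
\end{align*}
while $z=-1$ (as used in the proof of Theorem~\ref{parity results}) produces the coefficient $M(0,2,n) - M(1,2,n) = M(0,4,n) + M(2,4,n) - 2 M(1,4,n)$. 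Taking the half-sum and half-difference isolates $M(0,4,n) - M(1,4,n)$ and $M(2,4,n) - M(1,4,n)$, so the four inequalities are equivalent to sign assertions, split by parity of $n$, on the coefficients of
\begin{align*}
G_{\pm}(q) := \frac{1}{2}\!\left(\frac{(q;q)_{\infty}}{(-q;q)_{\infty}^{2}} \pm \frac{(q;q)_{\infty}}{(-q^{2};q^{2})_{\infty}}\right).
\end{align*}

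By Theorem~\ref{parity results}, the first summand $(q;q)_{\infty}/(-q;q)_{\infty}^{2}$ already has coefficients that alternate in sign with $n$: its $q \mapsto -q$ transform equals $(-q;q^{2})_{\infty}^{2}$, a product with manifestly nonnegative coefficients enumerating pairs of partitions into distinct odd parts. It remains to show that the corrective term $(q;q)_{\infty}/(-q^{2};q^{2})_{\infty}$ also alternates in sign with $n$, and is strictly dominated within each parity class, except at the two small values where it cancels the main term exactly. For the alternation I would rewrite
\begin{align*}
\frac{(q;q)_{\infty}}{(-q^{2};q^{2})_{\infty}} = \frac{(q;q^{2})_{\infty}(q^{2};q^{2})_{\infty}^{2}}{(q^{4};q^{4})_{\infty}},
\end{align*}
perform a $2$-dissection via the Jacobi triple product, and argue that each piece is, up to an overall sign, a theta quotient with manifestly signed coefficient expansion.

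The main obstacle, as I see it, is the strict-dominance step: verifying that within each parity class the coefficients of $(q;q)_{\infty}/(-q^{2};q^{2})_{\infty}$ are strictly smaller in absolute value than those of $(q;q)_{\infty}/(-q;q)_{\infty}^{2}$, with equality occurring at exactly the two exceptional small indices responsible for the conditions $n \neq 1$ and $n \neq 2$ in the theorem. The cleanest route is probably to produce an injection between the two underlying families of restricted partitions, or to factor the half-sums $G_{\pm}$ directly into a manifestly nonnegative infinite product times a small finite correction; locating the two exceptions then reduces to a direct low-degree computation combined with a monotone growth estimate that rules out further ones at large $n$.
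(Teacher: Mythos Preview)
The paper does not prove this statement at all: it is quoted verbatim from Andrews--Lewis \cite{AL} as motivation, and the paper then proves only its $k$-crank analogue for $M_k(r,4,n)$. So there is no proof here to compare against directly. Your overall setup --- specialize $\mathcal{C}(z,q)$ at $z=i$ and $z=-1$, form the half-sum and half-difference, and replace $q$ by $-q$ to turn sign-alternation into positivity --- is correct and is exactly the template the paper uses for its own $k=2$ analogue (the functions $\alpha(q)$ and $\beta(q)$ in the proof following the cited theorem).

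However, your proposed execution has a concrete error. You assert that the correction term $(q;q)_{\infty}/(-q^{2};q^{2})_{\infty}$ ``also alternates in sign with $n$'' and then plan to show it is strictly dominated by the main term. But this series does \emph{not} alternate: expanding $(q;q)_{\infty}(q^{2};q^{4})_{\infty}$ gives $1 - q - 2q^{2} + q^{3} + q^{4} + \cdots$, so the coefficient of $q^{3}$ is $+1$, not negative. In fact this is precisely why the exceptions at $n=2,3$ arise in the first two inequalities: if both the main term and the correction alternated with the same sign pattern, their half-sum $G_{+}$ would alternate with no exceptions, contradicting the theorem. So the decomposition ``alternating main plus alternating correction, with a dominance estimate'' cannot work as you have framed it. What does work (and what the paper does in its $k=2$ analogue) is to factor $G_{\pm}(-q)$ directly: the two summands after $q\mapsto -q$ share a common infinite-product factor, and the residual sum/difference has a clean combinatorial interpretation. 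For the ordinary crank there is an extra $(q;q)_{\infty}$ compared to the $k=2$ case, so the factoring is less immediate; you would need to look at Andrews--Lewis's original argument rather than simply transplant the paper's $k$-crank proof.
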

Similarly, the numbers $M_{k}(r,4,n)$ satisfy the following relations.
\begin{theorem}
For $n>0$,
\begin{align}
M_{2}(0,4,4n) &>M_{2}(2,4,4n)>M_{2}(1,4,4n),\quad \emph{for}~n\neq1,\label{ine:2mod4 4n}\\
M_{2}(2,4,4n+2) &>M_{2}(0,4,4n+2)>M_{2}(1,4,4n+2),\label{ine:2mod4 4n+2}\\
M_{2}(1,4,2n+1) &>M_{2}(0,4,2n+1)=M_{2}(2,4,2n+1), \label{ine:2mod4 2n+1}\\
M_{3}(0,4,2n) &>M_{3}(1,4,2n)=M_{3}(2,4,2n),\nonumber\\
M_{3}(0,4,2n+1) &=M_{3}(1,4,2n+1)>M_{3}(2,4,2n+1),\nonumber\\
M_{k}(0,4,n) &>M_{k}(1,4,n)>M_{k}(2,4,n),\quad \emph{for}~k\geq4. \nonumber
\end{align}
\end{theorem}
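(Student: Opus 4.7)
The plan is to apply the standard roots-of-unity filter at $z=i$ and $z=-1$ in \eqref{gf:k colored part crank}, which extracts the two non-trivial characters modulo $4$. Writing $A_k(q):=\mathcal{C}_k(i,q)=\frac{(q;q)_{\infty}^{2-k}}{(-q^2;q^2)_{\infty}}$ and $B_k(q):=\mathcal{C}_k(-1,q)=\frac{(q;q)_{\infty}^{2-k}}{(-q;q)_{\infty}^{2}}$, and invoking $M_k(1,4,n)=M_k(3,4,n)$ from \eqref{sym-kcrank}, I identify $A_k(n)=M_k(0,4,n)-M_k(2,4,n)$ and $B_k(n)=M_k(0,4,n)+M_k(2,4,n)-2M_k(1,4,n)$, so every comparison in the theorem reduces to a sign claim about $A_k(n)$, $A_k(n)+B_k(n)$, or $B_k(n)-A_k(n)$.

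For $k=3$ I expect a clean one-identity argument: elementary $q$-series manipulation should give $A_3(q)=\frac{1}{(q;q^2)_{\infty}(q^4;q^4)_{\infty}}$ (manifestly positive coefficients) and $B_3(q)=\frac{(q;q^2)_{\infty}}{(q^2;q^2)_{\infty}}$, and the relation $(q;q^2)_{\infty}(-q;q^2)_{\infty}(q^4;q^4)_{\infty}=(q^2;q^2)_{\infty}$ shows $A_3(q)=B_3(-q)$, i.e.\ $A_3(n)=(-1)^nB_3(n)$. This forces $B_3-A_3=0$ on even $n$ and $A_3+B_3=0$ on odd $n$, giving the claimed equalities, while $A_3(n)>0$ supplies the strict inequalities. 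For $k\ge 4$, clearing denominators via $(-q;q)_{\infty}^{2}=(-q;q^2)_{\infty}^{2}(-q^2;q^2)_{\infty}^{2}$ and $(-q;q^2)_{\infty}(-q^2;q^2)_{\infty}=(-q;q)_{\infty}$ yields
\[
A_k(q)\pm B_k(q)=\frac{(-q;q^2)_{\infty}^{2}(-q^2;q^2)_{\infty}\pm 1}{(q;q)_{\infty}^{k-4}(q^2;q^2)_{\infty}^{2}}.
\]
The denominator has a positive-coefficient series inverse, the $+1$ numerator is manifestly positive, and the $-1$ numerator is positive for $n\ge 1$ because in the convolution $(-q;q^2)_{\infty}^{2}(-q^2;q^2)_{\infty}$ the contribution $a_n b_0+a_{n-2}b_2$ alone already exceeds $1$ (writing $a_j,b_l$ for the positive coefficients of the two factors). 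Combined with $A_k(q)=\frac{1}{(q;q^2)_{\infty}^{k-2}(q^2;q^2)_{\infty}^{k-3}(q^4;q^4)_{\infty}}$ having positive coefficients for $k\ge 3$, this delivers $M_k(0,4,n)>M_k(1,4,n)>M_k(2,4,n)$ for $k\ge 4$ and $n\ge 1$.

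The $k=2$ case is the main obstacle, since $A_2(q)=(q^2;q^4)_{\infty}$ and $B_2(q)=(q;q^2)_{\infty}^{2}$ both have mixed-sign coefficients. The odd-$n$ claim follows at once: $A_2$ has no odd-degree support, and $B_2(n)<0$ for odd $n$ by the $k=2$ case of Theorem~\ref{parity results}. For even $n$ I would first pin down the sign of $A_2(n)$ by a residue-mod-$4$ length argument: a partition of $n$ into distinct parts from $\{2,6,10,\dots\}$ with $\ell$ parts has sum $\equiv 2\ell\pmod 4$, forcing $\ell$ to be even when $n=4m$ and odd when $n=4m+2$, so the explicit partitions $\{2,4m-2\}$ (for $m\ge 2$) and $\{4m+2\}$ give $A_2(4m)>0$ and $A_2(4m+2)<0$ in the asserted ranges. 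For the magnitude comparisons I would use the factorizations
\[
B_2(-q)\pm A_2(q)=(-q;q^2)_{\infty}\bigl[(-q;q^2)_{\infty}\pm(q;q^2)_{\infty}\bigr],
\]
both products of non-negative series since $(-q;q^2)_{\infty}\pm(q;q^2)_{\infty}$ picks out twice the even (resp.\ odd) part of $(-q;q^2)_{\infty}=\sum d_o(n)q^n$; a short coefficient check yields strict positivity for all $n\ge 1$ except at $n=2$ for the $+$ case, which is precisely the residue the hypothesis ``$n>0$'' excludes in \eqref{ine:2mod4 4n+2}. Reading these sign conclusions back through $B_k(n)\pm A_k(n)$ then completes the $k=2$ case.
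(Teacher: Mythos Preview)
Your approach is essentially the same as the paper's: specialize \eqref{gf:k colored part crank} at $z=i$ and $z=-1$, then analyze the signs of $A_k\pm B_k$ via the factorization $(-q;q^2)_\infty\bigl[(-q;q^2)_\infty\pm(q;q^2)_\infty\bigr]$ (exactly the paper's $\alpha(-q)$ and $\beta(-q)$ with their $\mathcal{OD}_0,\mathcal{OD}_1$ interpretation). Where the paper simply writes ``the arguments for cases with $k\ge 3$ are similar so we choose to omit them'', you supply the details; your identity $A_3(q)=B_3(-q)$ is a clean way to obtain the $k=3$ equalities, and your residue-mod-$4$ argument makes explicit the sign pattern of $(q^2;q^4)_\infty$ that the paper merely asserts. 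One small slip: the fact $B_2(n)<0$ for odd $n$ comes from Theorem~\ref{thm:alternate thm}, not Theorem~\ref{parity results}; and in the $k\ge4$ numerator you only need the coefficient to be positive (not to exceed $1$), so the $a_{n-2}b_2$ term is unnecessary and in fact undefined at $n=1$.
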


\begin{proof}
Firstly, setting $k=2, z=i$ in in \eqref{gf:k colored part crank} gives
\begin{align}
\sum_{n=0}^{\infty}\left(M_{2}(0,4,n)-M_{2}(2,4,n)\right)q^{n} &=\dfrac{1}{(-q^{2};q^{2})_{\infty}}=(q^2;q^4)_{\infty},\label{gf:g2}
\end{align}
which does not have any odd powers of $q$ in the expansion, and the coefficients of $q^{4n}$ are all positive except for $n=1$, while the coefficients of $q^{4n+2}$ are all negative.

Recall from \eqref{gf:f},
\begin{align*}
 &\sum_{n=0}^{\infty}\left(M_{2}(0,4,n)+M_{2}(2,4,n)-2M_{2}(1,4,n)\right)q^{n}\\
 =&\sum_{n=0}^{\infty}\left(M_{2}(0,2,n)-M_{2}(1,2,n)\right)q^{n}=\dfrac{1}{(-q;q)_{\infty}^{2}},
\end{align*}
combining this with \eqref{gf:g2} we obtain
\begin{align*}
\sum_{n=0}^{\infty}\left(M_{2}(0,4,n)-M_{2}(1,4,n)\right)q^{n} &=\dfrac{1}{2}\left\{\dfrac{1}{(-q;q)_{\infty}^{2}}+\dfrac{1}{(-q^{2};q^{2})_{\infty}}\right\}:=\alpha(q),
\end{align*}
say. Now
\begin{align*}
\alpha(-q) &=\dfrac{1}{2}\left\{\dfrac{1}{(q;-q)_{\infty}^{2}}+\dfrac{1}{(-q^{2};q^{2})_{\infty}}\right\} =\dfrac{1}{2}\left\{(-q;q^{2})_{\infty}^{2}+(q^{2};q^{4})_{\infty}\right\}\\
 &=\dfrac{1}{2}(-q;q^{2})_{\infty}\left\{(-q;q^{2})_{\infty}+(q;q^{2})_{\infty}\right\}=(-q;q^{2})_{\infty}\sum_{n=0}^{\infty}
 \mathcal{OD}_{0}(n)q^{n},
\end{align*}
where $\mathcal{OD}_{0}(n)$ is the number of partitions of $n$ into an even number of distinct odd parts. Consequently the coefficients of $q^{n}$ in $\alpha(-q)$ are positive except for $n=2$.

In just the same way, we see that
\begin{align*}
\sum_{n=0}^{\infty}\left(M_{2}(2,4,n)-M_{2}(1,4,n)\right)q^{n}=\dfrac{1}{2}\left\{\dfrac{1}{(-q;q)_{\infty}^{2}}-\dfrac{1}{(-q^{2};q^{2})_{\infty}}\right\} &:=\beta(q),
\end{align*}
say, then
\begin{align*}
\beta(-q)=\dfrac{1}{2}(-q;q^{2})_{\infty}\left\{(-q;q^{2})_{\infty}-(q;q^{2})_{\infty}\right\}
=(-q;q^{2})_{\infty}\sum_{n=0}^{\infty}\mathcal{OD}_{1}(n)q^{n},
\end{align*}
where $\mathcal{OD}_{1}(n)$ is the number of partitions of $n$ into an odd number of distinct odd parts. Therefore the coefficients of $q^{n}$ in $\beta(-q)$ are positive for $n>0$. Thus we have proved \eqref{ine:2mod4 4n}--\eqref{ine:2mod4 2n+1}. The arguments for cases with $k\geq 3$ are similar so we choose to omit them.
\end{proof}

\section{2-crank and 3-crank moments: symmetrized and weighted}\label{sec:3}
In 2003, Atkin and Garvan \cite{AG} demonstrated the importance of the moments of ranks and cranks in the study of further partition congruences. Later, Andrews \cite{Andr3} considered a cominatorial interpretation of the moments of rank by introducing a symmetrized rank moment. This in turn motivated Garvan \cite{Gar2} to consider the symmetrized crank moments in his study of the higher order spt-functions. To be more precise, the $j^{\text{th}}$ symmetrized crank moment as defined by Garvan \cite{Gar2} is
\begin{align*}
\mu_{j}(n)=\sum_{m=-n}^{n}\left(m+\left\lfloor\frac{j-1}{2}\right\rfloor\atop j\right)M(m,n),
\end{align*}
where $\lfloor x \rfloor:=\max\{m\in\mathbb{Z}: m\leq x\}$ is the usual floor function. From the symmetry $M(m,n)=M(-m,n)$, it is clear that $\mu_{2j+1}(n)=0$.

In our setting, a natural analog for $k$-crank is given by
\begin{align*}
\mu_{j,k}(n)=\sum_{m=-n}^{n}\left(m+\left\lfloor\frac{j-1}{2}\right\rfloor\atop j\right)M_{k}(m,n),
\end{align*}
which is also meaningful for even $j$ only due to the symmetry \eqref{sym-kcrank} enjoyed by the $k$-cranks.

Recently, Ji and Zhao \cite{JZ} considered the $2j^{\text{th}}$ crank moment weighted by the parity of cranks, i.e.,
\begin{align*}
\mu_{2j}(-1,n):=\sum_{m=-n}^{n}\binom{m+j-1}{2j}(-1)^{m}M(m,n).
\end{align*}
They showed the following positivity property of $(-1)^{n}\mu_{2j}(-1,n)$, which encompasses Theorem~\ref{parity results} as the $j=0$ case.
\begin{theorem}[Theorem 1.2 in \cite{JZ}]\label{thm:J-Z}
For $n\geq j\geq0$, $(-1)^{n}\mu_{2j}(-1,n)>0$.
\end{theorem}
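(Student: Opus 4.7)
The plan is to produce a generating function for $\mu_{2j}(-1,n)$ whose coefficients have a visible sign pattern, and then read off strict positivity for $n\ge j$. The starting point is the crank generating function $\mathcal{C}(z,q)$ specialized at $z=-1$, which gives $\mathcal{C}(-1,q)=(q;q)_\infty/(-q;q)_\infty^2$ and already encodes the $j=0$ case via Theorem~\ref{parity results}. To introduce the binomial weight $\binom{m+j-1}{2j}$, a polynomial of degree $2j$ in $m$, I would apply a polynomial operator $P_j(z\partial_z)$ to $\mathcal{C}(z,q)$ with $P_j(m)=\binom{m+j-1}{2j}$ and evaluate at $z=-1$:
\[
\sum_{n\ge0}\mu_{2j}(-1,n)\,q^n=\bigl[P_j(z\partial_z)\,\mathcal{C}(z,q)\bigr]_{z=-1}.
\]

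Next, I would follow the symmetrized-moment machinery developed by Atkin and Garvan \cite{AG} and refined by Garvan \cite{Gar2}, converting the operator expression into a Hecke or Lambert type $q$-series. Concretely, one differentiates the product form of $\mathcal{C}(z,q)$ and applies the $q$-partial fraction expansion of $1/\bigl((zq;q)_\infty(z^{-1}q;q)_\infty\bigr)$ to isolate the singularities at $z=q^{-r}$; setting $z=-1$ then produces a sum indexed by $r\ge 1$ whose terms are explicit rational $q$-series. The goal is to arrange matters so that, after the substitution $q\mapsto -q$ (a trick that already powered the proof of Theorem~\ref{thm:alternate thm}), every summand has nonnegative power series coefficients, with the leading summand contributing strictly positive coefficients from level $q^j$ onward. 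This would yield $(-1)^n\mu_{2j}(-1,n)>0$ in the stated range.

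The main obstacle is executing the second step: recognizing the operator output as a manifestly sign-definite $q$-series typically requires a Bailey pair adapted to the parity weight $(-1)^m$, and verifying term-by-term positivity after $q\mapsto -q$ is not automatic even when the Bailey pair expansion is clean. As a fallback, I would attempt induction on $j$, with base case Theorem~\ref{parity results}, deriving a recursion of the form $\mu_{2j}(-1,n)=\lambda_j\mu_{2j-2}(-1,n)+R_j(n)$ from $q$-difference equations for $\mathcal{C}(-1,q)$; the delicate point would be ensuring that the remainder $R_j(n)$ does not reverse the parity-weighted sign inherited from $\mu_{2j-2}(-1,n)$ in the range $n\ge j$.
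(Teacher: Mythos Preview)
This theorem is not proved in the present paper; it is quoted from Ji and Zhao \cite{JZ}. What the paper does prove is the $k$-colored analogue, Theorem~\ref{thm:positivity}, remarking that the derivations of \eqref{gf:mu2jk1} and \eqref{gf:mu2jk2} ``are similar to those of the corresponding results in \cite{JZ}''. So the relevant comparison is between your proposal and the method behind Theorem~\ref{thm:positivity}.

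Your proposal is a sketch of a strategy rather than a proof, and you yourself flag the central step as an unresolved obstacle. The actual argument (in \cite{JZ}, and mirrored here for $k\ge 2$) hinges on one explicit identity that you never write down: the $j$-fold Lambert-type expansion which, in the $k$-colored version, is \eqref{gf:mu2jk2}. This is not obtained via $q$-partial fractions of $1/\bigl((zq;q)_\infty(z^{-1}q;q)_\infty\bigr)$ or via a Bailey pair in the way you suggest, but from Andrews' $j$-fold generalization of $q$-Whipple's theorem \cite{Andr1}, passing first through the intermediate nested-sum form \eqref{gf:mu2jk1}. Once \eqref{gf:mu2jk2} is in hand, the positivity argument is short and requires no Bailey machinery: replace $q$ by $-q$, pull one copy of $(-q;q^2)_\infty$ inside the sum, and split on the parity of each $m_i$. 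Odd $m_i$ contribute factors $1/(1+q^{m_i})$ that are cancelled by $(-q;q^2)_\infty$ (the $m_i$ being distinct), and even $m_i$ contribute $1/(1-q^{m_i})$, which already has nonnegative coefficients. The factor $q^{m_j}$ with $m_j\ge j$ then forces strict positivity from $n=j$ onward.

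Your fallback induction on $j$ via a recursion $\mu_{2j}(-1,n)=\lambda_j\,\mu_{2j-2}(-1,n)+R_j(n)$ is not the route taken, and nothing in the paper suggests such a recursion with a sign-controlled remainder exists. The genuine gap in your proposal is therefore the missing key identity \eqref{gf:mu2jk2} (equivalently its $k=1$ version in \cite{JZ}) and the specific tool---Andrews' $j$-fold $q$-Whipple theorem---that produces it; without that, neither your primary plan nor your fallback can be completed.
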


Motivated by the work of Ji and Zhao, we consider the $2j^{\text{th}}$ symmetrized moments of $k$-colored partitions weighted by the parity of $k$-cranks, defined as
\begin{align}
\mu_{2j,k}(-1,n) &:=\sum_{m=-n}^{n}\binom{m+j-1}{2j}(-1)^{m}M_{k}(m,n), \quad \textrm{for } k\geq 2.\label{mu2jk-1}
\end{align}

When $j=0$, $k=2, 3$, \eqref{mu2jk-1} reduces to
\begin{align*}
\mu_{0,2}(-1,n) &=M_{2}(0,2,n)-M_{2}(1,2,n),\\
\mu_{0,3}(-1,n) &=M_{3}(0,2,n)-M_{3}(1,2,n).
\end{align*}
So the following result parallels Theorem~\ref{thm:J-Z} and includes \eqref{analog ine1} and \eqref{analog ine2} as the $j=0$ case.
\begin{theorem}\label{thm:positivity}
For $n\geq j\geq0$, we have
\begin{align}
(-1)^{n}\mu_{2j,2}(-1,n) &>0, \label{2-color po}\\
(-1)^{n}\mu_{2j,3}(-1,n) &>0. \label{3-color po}
\end{align}
\end{theorem}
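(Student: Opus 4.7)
The strategy, modeled on Ji and Zhao \cite{JZ}, proceeds in three steps. \textbf{Step 1.} Substitute $z = -e^\theta$ into $\mathcal{C}_k(z,q)$; by the symmetry \eqref{sym-kcrank} the resulting series is even in $\theta$, hence a formal power series in $u := e^\theta + e^{-\theta} - 2$. Combining with the classical Chebyshev-type expansion
$$e^{m\theta}+e^{-m\theta} \;=\; \sum_{j \ge 0}\Bigl(\binom{m+j-1}{2j}+\binom{-m+j-1}{2j}\Bigr) u^j$$
yields the master identity
$$\mathcal{C}_k(-e^\theta,q) \;=\; \sum_{j \ge 0}\Bigl(\sum_{n\ge 0}\mu_{2j,k}(-1,n)\, q^n\Bigr) u^j.$$
Applying the Jacobi triple product to the product side, together with the identity $T_{2n+1}(\cosh(\theta/2))=\cosh(\theta/2)\, V_n(u)$ (where $V_n(u)$ is a polynomial of degree $n$ in $u$ with nonnegative integer coefficients and $V_n(0)=1$, coming from the Chebyshev expansion), rewrites the left-hand side as
$$\mathcal{C}_k(-e^\theta,q) \;=\; \frac{(q;q)_\infty^{3-k}}{\displaystyle\sum_{n\ge 0} V_n(u)\, q^{n(n+1)/2}},\qquad k = 2, 3.$$

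\textbf{Step 2.} Substitute $q \mapsto -q$ to transfer the sign $(-1)^n$ to the series. The numerator $(q;q)_\infty^{3-k}$ becomes $\bigl((q^2;q^2)_\infty(-q;q^2)_\infty\bigr)^{3-k}$, which for $k=2$ is manifestly positive coefficient-wise and for $k=3$ is simply $1$; the denominator becomes the alternating theta series $S_-(u,q) := \sum_n (-1)^{n(n+1)/2}V_n(u) q^{n(n+1)/2}$. Extracting the $u^j$-coefficient gives an explicit formula for $\sum_{n\ge 0}(-1)^n \mu_{2j,k}(-1,n) q^n$. The $j = 0$ case recovers $(-q;q^2)_\infty^2$ and $(-q;q^2)_\infty/(q^2;q^2)_\infty$ respectively (via $\psi_-(q) := S_-(0,q) = (q^2;q^2)_\infty/(-q;q^2)_\infty$), both strictly positive, which is consistent with Theorem \ref{thm:alternate thm}.

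\textbf{Step 3, the main obstacle.} For $j \ge 1$ one must show that the expansion
$$\frac{1}{S_-(u,q)} \;=\; \frac{1}{\psi_-(q)}\sum_{i\ge 0}(-1)^i\Bigl(\sum_{k\ge 1} u^k A_k(q)/\psi_-(q)\Bigr)^i,\qquad A_k(q) := \sum_{n \ge 0}(-1)^{n(n+1)/2}[u^k]V_n(u)\cdot q^{n(n+1)/2},$$
produces, after extracting $[u^j]$ and multiplying by the numerator, a $q$-series whose coefficients are strictly positive from $q^j$ onward. The difficulty is that the alternating factors $(-1)^{n(n+1)/2}$ in each $A_k$ and the signs $(-1)^i$ in the geometric expansion introduce cancellations that must be controlled. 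The cleanest route I anticipate is a combinatorial one: interpret $(-1)^n\mu_{2j,k}(-1,n)$ as counting $k$-colored partitions of $n$ decorated with $j$ marked odd parts (parallel to the distinct-odd-part interpretation at $j=0$), and then exhibit a sign-reversing involution on the signed side. An alternative is an induction on $j$ that leverages the three-term Chebyshev recurrence $V_{n+1}(u) = (u+2)V_n(u) - V_{n-1}(u)$ to relate the $u^j$- and $u^{j-1}$-coefficients of $1/S_-(u,q)$; the base case is supplied by Theorem \ref{thm:alternate thm}, and the inductive step reduces strict positivity at level $j$ to a comparison of positive $q$-series at level $j-1$. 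Verifying that the inequality is already strict at the boundary $n = j$ (rather than only for $n$ sufficiently large) is the most delicate point on either route.
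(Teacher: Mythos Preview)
Your Steps 1 and 2 set up a legitimate framework, and it is genuinely different from the paper's: you rewrite $\mathcal{C}_k(-e^\theta,q)$ via the Jacobi triple product and Chebyshev--type polynomials $V_n(u)$, then pass $q\mapsto -q$. The difficulty is that your Step~3 is not a proof but a sketch of two possible attacks, neither of which you execute. You yourself call it ``the main obstacle,'' note that the alternating signs $(-1)^{n(n+1)/2}$ in the $A_k(q)$ and the $(-1)^i$ in the geometric expansion ``must be controlled,'' and then only \emph{anticipate} a sign--reversing involution or an induction via the recurrence $V_{n+1}=(u+2)V_n-V_{n-1}$. Neither the involution nor the inductive comparison is constructed, and the delicate boundary case $n=j$ is left open. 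As it stands, the proposal does not establish the theorem.

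The paper sidesteps this obstacle completely by working with a different representation of the generating function, the Lambert--type identity \eqref{gf:mu2jk2} (obtained from \eqref{gf:mu2jk1} via Andrews's $j$--fold $q$--Whipple theorem):
\[
\sum_{n\ge 0}\mu_{2j,k}(-1,n)\,q^n
=\frac{1}{(q;q)_\infty^{k-2}(-q;q)_\infty^{2}}
\sum_{m_j>\cdots>m_1\ge 1}
\frac{(-1)^{m_j}\,m_1(m_2-m_1)\cdots(m_j-m_{j-1})\,q^{m_j}}
{(1-q^{m_1})\cdots(1-q^{m_j})}.
\]
After $q\mapsto -q$ and for $k=2,3$, the prefactor becomes $(-q;q^2)_\infty^{2}$ or $(-q;q^2)_\infty/(q^2;q^2)_\infty$; one copy of $(-q;q^2)_\infty$ is distributed into the sum, and each summand
\[
\frac{(-q;q^2)_\infty}{\displaystyle\prod_{i=1}^{j}\bigl(1-(-q)^{m_i}\bigr)}
\]
has nonnegative coefficients by a two--line parity check: if $m_i$ is odd then $1-(-q)^{m_i}=1+q^{m_i}$ cancels into $(-q;q^2)_\infty$ (the $m_i$ are distinct), while if $m_i$ is even then $1/(1-q^{m_i})$ is already nonnegative. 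The remaining factor $q^{m_j}$ with $m_j\ge j$ and constant term $1$ give strict positivity for all $n\ge j$. This is the missing idea in your approach: a representation in which positivity is term--by--term and no cancellation analysis is needed.
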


With the help of Andrew's $j$-fold generalization of $q$-Whipple's theorem \cite[p.199, Theorem~4]{Andr1}, we can derive the following explicit generating functions for $\mu_{2j,k}(-1,n)$.
\begin{align}\label{gf:mu2jk1}
&\sum_{n=0}^{\infty}\mu_{2j,k}(-1,n)q^{n} \\
=&\dfrac{1}{(q;q)_{\infty}^{k-2}(-q;q)_{\infty}^{2}}\sum_{n_{j}\geq n_{j-1}\geq\cdots\geq n_{1}\geq1}\dfrac{(-1)^{j}q^{n_{1}+n_{2}+\cdots+n_{j}}}{(1+q^{n_{1}})^{2}(1+q^{n_{2}})^{2}\cdots(1+q^{n_{j}})^{2}}.\nonumber
\end{align}

Furthermore, the above generating functions \eqref{gf:mu2jk1} is equivalent to the following form, which is the key identity for the proof of Theorem~\ref{thm:positivity}.
\begin{align}\label{gf:mu2jk2}
&\sum_{n=0}^{\infty}\mu_{2j,k}(-1,n)q^{n} \\
=&\dfrac{1}{(q;q)_{\infty}^{k-2}(-q;q)_{\infty}^{2}}\sum_{m_{j}> m_{j-1}>\cdots>m_{1}\geq1}\dfrac{(-1)^{m_{j}}m_{1}(m_{2}-m_{1})\cdots(m_{j}-m_{j-1})q^{m_{j}}}{(1-q^{m_{1}})(1-q^{m_{2}})\cdots(1-q^{m_{j}})}.\nonumber
\end{align}

The proofs of \eqref{gf:mu2jk1} and \eqref{gf:mu2jk2} are similar to those of the corresponding results in \cite{JZ}, thus we omit the details here.

Now we are ready to derive \eqref{2-color po} and \eqref{3-color po}.
\begin{proof}[Proof of Theorem~\ref{thm:positivity}]
Replacing $q$ by $-q$ in \eqref{gf:mu2jk2} and putting $k=2$, we see that
\begin{align*}
 &\sum_{n=0}^{\infty}(-1)^{n}\mu_{2j,2}(-1,n)q^{n}\\
 =&\dfrac{1}{(q;-q)_{\infty}^{2}}\sum_{m_{j}> m_{j-1}>\cdots>m_{1}\geq1}\dfrac{m_{1}(m_{2}-m_{1})\cdots(m_{j}-m_{j-1})q^{m_{j}}}{(1-(-q)^{m_{1}})(1-(-q)^{m_{2}})\cdots(1-(-q)^{m_{j}})}\\
 =&(-q;q^{2})_{\infty}\sum_{m_{j}> m_{j-1}>\cdots>m_{1}\geq1}\dfrac{m_{1}(m_{2}-m_{1})\cdots(m_{j}-m_{j-1})q^{m_{j}}(-q;q^{2})_{\infty}}{(1-(-q)^{m_{1}})(1-(-q)^{m_{2}})
 \cdots(1-(-q)^{m_{j}})}.
\end{align*}
Given $m_{j}> m_{j-1}>\cdots>m_{1}\geq1$, define
\begin{align}\label{odd power}
\sum_{m=0}^{\infty}g_{m_{1},m_{2},\cdots,m_{j}}(m)q^{m}:=\dfrac{(-q;q^{2})_{\infty}}{(1-(-q)^{m_{1}})(1-(-q)^{m_{2}})\cdots(1-(-q)^{m_{j}})}.
\end{align}
For each $m_{i}$, we discuss by two cases according to the parity: (i) $m_{i}$ is odd, then such factors $(1-(-q)^{m_{i}})$ in the denominator will all be cancelled by $(-q;q^{2})_{\infty}$ since the $m_{i}$ are distinct; (ii) $m_{i}$ is even, then the coefficients of $1/(1-(-q)^{m_{i}})$ are all nonnegative. Thus we arrive at $g_{m_{1},m_{2},\cdots,m_{k}}(m)\geq 0$ and $g_{m_{1},m_{2},\cdots,m_{k}}(0)=1$, together with the factor $q^{m_j}, m_j\geq j$, we can deduce \eqref{2-color po}.

Similarly, replacing $q$ by $-q$ in \eqref{gf:mu2jk2} and taking $k=3$, we find that
\begin{align*}
 &\sum_{n=0}^{\infty}(-1)^{n}\mu_{2j,3}(-1,n)q^{n}\\
 =&\dfrac{1}{(-q;-q)_{\infty}(q;-q)_{\infty}^{2}}\sum_{m_{j}> m_{j-1}>\cdots>m_{1}\geq1}\dfrac{m_{1}(m_{2}-m_{1})\cdots(m_{j}-m_{j-1})q^{m_{j}}}{(1-(-q)^{m_{1}})(1-(-q)^{m_{2}})\cdots(1-(-q)^{m_{j}})}\\
 =&\dfrac{1}{(q^{2};q^{2})_{\infty}}\sum_{m_{j}> m_{j-1}>\cdots>m_{1}\geq1}\dfrac{m_{1}(m_{2}-m_{1})\cdots(m_{j}-m_{j-1})q^{m_{j}}(-q;q^{2})_{\infty}}{(1-(-q)^{m_{1}})(1-(-q)^{m_{2}})\cdots(1-(-q)^{m_{j}})}.
\end{align*}
The rest of the proof is similar by analysing \eqref{odd power} and is omitted.
\end{proof}

\section{Final Remarks}\label{sec4}
We conclude with several questions that merit further investigation.
\begin{enumerate}[1)]
\item Atkin and Garvan \cite{AG} defined the $j^{\text{th}}$ moment of the crank by
\begin{align*}
M_{j}(n) &=\sum_{m=-n}^{n}m^{j}M(m,n),
\end{align*}
then they reproved the following beautiful identity
\begin{align}\label{Dyson eq}
\sum_{m=-n}^{n}m^{2}M(m,n)=2np(n),
\end{align}
due to Dyson \cite{Dys2}, who gave a combinatorial proof.

Following the same line as the proof of Atkin and Garvan and by applying the chain rule for taking derivative, we also obtain an analogue of \eqref{Dyson eq} for $k$-colored partitions.
For all positive integer $k$, we have
\begin{align}\label{analog Dyson thm}
\sum_{m=-n}^{n}m^{2}M_{k}(m,n)=\dfrac{2}{k}np_{k}(n),
\end{align}
where $p_{k}(n)$ denotes the number of $k$-colored partitions of $n$. This is equivalent to saying the mean-square $k$-crank of $k$-colored partitions of $n$ is exactly $2n/k$. It should be clear that the right hand side of \eqref{analog Dyson thm} is always an integer, and it would be appealing to find a combinatorial proof for this identity.

\item Lewis \cite{Lew} showed that
\begin{align*}
N(0,2,2n) &<N(1,2,2n),\quad \textrm{if}~n\neq1,\\
N(1,2,2n+1) &<N(0,2,2n+1),\quad \textrm{if}~n\neq0,
\end{align*}
where $N(r,m,n)$ denotes the number of partitions of $n$ with rank congruent to $r$ modulo $m$. His proof is combinatorial (bijective) in nature and consists of the construction of maps
\begin{align*}
\{\textrm{partitions~of~$2n$~of~even~rank}\} &\rightarrow\{\textrm{partitions~of~$2n$~of~odd~rank}\}, \text{ and }\\
\{\textrm{partitions~of~$2n+1$~of~odd~rank}\} &\rightarrow\{\textrm{partitions~of~$2n+1$~of~even~rank}\},
\end{align*}
that are injective, but not surjective. One naturally wonders if analogous combinatorial analysis can be applied to prove Theorem~\ref{thm:alternate thm}, or in general any one of the inequalities presented in Section~\ref{sec:2}.

\item When $k\geq4$, $(-1)^{n}\mu_{2j,k}(-1,n)$ do not possess the positivity property as $(-1)^{n}\mu_{2j,2}(-1,n)$ and $(-1)^{n}\mu_{2j,3}(-1,n)$. However, they may have other interesting properties. On the other hand, there are many more rank and crank identities. For example,
\begin{align*}
M(0,8,4n+1)+M(1,8,4n+1)=M(3,8,4n+1)+M(4,8,4n+1),
\end{align*}
and others for moduli 5, 7, 8, 9, 10 and 11. The readers are referred to \cite{Gar1,Gar3,LS} and the references therein for more details. It would be interesting to find similar identities for $k$-crank.

\item A sequence of numbers $a_{1}, a_{2}, \cdots, a_{n}$ is \emph{unimodal} if it never increases after the first time it decreases, i.e., if for some index $j$ we have $a_{1}\leq a_{2}\leq\cdots a_{j-1}\leq a_{j}\geq a_{j+1}\geq\cdots\geq a_n$. The numerical evidence (see Table~\ref{Tab1}) suggests the following conjecture.
\begin{conjecture}\label{conj:unimodal}
For $n\geq 0$ and $k\geq 2$, the sequence $\{M_{k}(m,n)\}_{m=-n}^{n}$ is unimodal except for $n=1,k=2$.
\end{conjecture}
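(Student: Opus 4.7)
The plan is to split the conjecture into two stages: reduce the case $k\geq 3$ to the case $k=2$, then attack the latter via an $\mathfrak{sl}_{2}$-representation-theoretic decomposition of the character. The reduction rests on the factorization $\mathcal{C}_{k}(z,q)=\mathcal{C}_{2}(z,q)/(q;q)_{\infty}^{k-2}$, which at the coefficient level reads
\[
M_{k}(m,n)=\sum_{a+b=n}M_{2}(m,a)\,p_{k-2}(b),
\]
where $p_{k-2}(b)$ denotes the number of $(k-2)$-colored partitions of $b$. Assuming the $k=2$ conjecture, for every $a\neq 1$ the Laurent polynomial $\sum_{m}M_{2}(m,a)z^{m}$ is symmetric and unimodal with peak at $z^{0}$, while the $a=1$ contribution is $(z+z^{-1})p_{k-2}(n-1)$. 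For $k\geq 3$ the sequence $\{p_{k-2}(b)\}_{b\geq 0}$ is weakly increasing, because $(1-q)/(q;q)_{\infty}^{k-2}$ manifestly has non-negative coefficients; hence the lone negative contribution $M_{2}(0,1)-M_{2}(1,1)=-1$ inside $M_{k}(0,n)-M_{k}(1,n)$ is absorbed by the $a=0$ term $p_{k-2}(n)-p_{k-2}(n-1)\geq 0$, and all $M_{k}(m,n)-M_{k}(m+1,n)$ with $m\geq 1$ are non-negative termwise. So the conjecture for $k\geq 3$ reduces entirely to the $k=2$ statement, and it remains to prove unimodality of $[q^{n}]\mathcal{C}_{2}(z,q)$ for $n\geq 2$.

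For that, I would regard $\mathcal{C}_{2}(z,q)=\prod_{i\geq 1}(1-zq^{i})^{-1}(1-z^{-1}q^{i})^{-1}$ as the $(z,q)$-character of $\mathcal{A}=\mathbb{C}[x_{1},y_{1},x_{2},y_{2},\ldots]$, where $x_{i},y_{i}$ carry $q$-weight $i$ and $z$-weights $\pm 1$. The operators
\[
E=\sum_{i\geq 1}x_{i}\,\frac{\partial}{\partial y_{i}},\qquad F=\sum_{i\geq 1}y_{i}\,\frac{\partial}{\partial x_{i}},\qquad H=\sum_{i\geq 1}\Bigl(x_{i}\frac{\partial}{\partial x_{i}}-y_{i}\frac{\partial}{\partial y_{i}}\Bigr)
\]
satisfy the $\mathfrak{sl}_{2}$-relations and preserve the $q$-grading, with $H$ acting by the $z$-weight on a monomial. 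Since $\mathcal{A}=\bigotimes_{i\geq 1}\mathrm{Sym}^{\bullet}(\mathbb{C}^{2})$ as $\mathfrak{sl}_{2}$-modules, with each $\mathbb{C}^{2}$ placed in $q$-degree $i$, one obtains
\[
[q^{n}]\mathcal{C}_{2}(z,q)=\sum_{\sum_{i}i\,n_{i}=n}\chi\Bigl(\bigotimes_{i\geq 1}\mathrm{Sym}^{n_{i}}(\mathbb{C}^{2})\Bigr),
\]
with $\chi(\mathrm{Sym}^{n}\mathbb{C}^{2})=z^{-n}(1+z^{2}+\cdots+z^{2n})$.

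Each summand above equals $z^{-N}\prod_{i}(1+z^{2}+\cdots+z^{2n_{i}})$ with $N=\sum_{i}n_{i}$, and the polynomial $\prod_{i}(1+x+\cdots+x^{n_{i}})$ has palindromic, log-concave coefficients: each factor is a P\'olya-frequency sequence with no internal zeros, and PF sequences are closed under convolution. Thus each summand is a symmetric unimodal Laurent polynomial in $z$ with support in powers of parity $N\bmod 2$, peaked at $z^{0}$ when $N$ is even and at $z^{\pm 1}$ when $N$ is odd. Grouping summands by the parity of $N$, the even-power portion of $[q^{n}]\mathcal{C}_{2}(z,q)$ is unimodal symmetric with peak at $z^{0}$, and the odd-power portion is unimodal symmetric with peak at $z^{\pm 1}$.

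The last and most delicate step is the cross-parity interleaving $[z^{0}]\geq[z^{\pm 1}]\geq[z^{\pm 2}]\geq\cdots$, which does not follow formally from the $\mathfrak{sl}_{2}$-decomposition. Writing $[q^{n}]\mathcal{C}_{2}(z,q)=\sum_{r\geq 0}m_{r,n}\chi(V_{r})$ with $V_{r}=\mathrm{Sym}^{r}(\mathbb{C}^{2})$, what is needed is $\sum_{r\geq j,\,r\equiv j\,(\mathrm{mod}\,2)}m_{r,n}\geq\sum_{r\geq j+1,\,r\equiv j+1\,(\mathrm{mod}\,2)}m_{r,n}$ for every $j\geq 0$ whenever $n\geq 2$. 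I would establish this via an explicit injection $\{(\lambda,\mu)\colon\ell(\lambda)-\ell(\mu)=m+1,\;|\lambda|+|\mu|=n\}\hookrightarrow\{(\lambda,\mu)\colon\ell(\lambda)-\ell(\mu)=m,\;|\lambda|+|\mu|=n\}$ for every $m\geq 0$. The naive ``transfer the last row of $\lambda$ to the first row of $\mu$'' map is many-to-one because the transferred amount $\lambda_{\ell}$ is lost inside $\mu_{1}$; the fix is to bookmark $\lambda_{\ell}$ via the Frobenius-type difference $\mu'_{1}-\mu'_{2}$ on a suitable normalized subdomain, supplemented by the absorption moves $((\lambda_{1},\ldots,\lambda_{\ell-1},1),\emptyset)\mapsto((\lambda_{1}+1,\lambda_{2},\ldots,\lambda_{\ell-1}),\emptyset)$ when $\ell\geq 2$ and $((n),\emptyset)\mapsto((n-1),(1))$ when $\ell=1$ and $n\geq 2$. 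The exception $n=1,k=2$ emerges automatically, since no pair $(\lambda,\mu)$ of total size $1$ has equal lengths, making the injection impossible precisely in that case.
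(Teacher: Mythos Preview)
This statement is presented in the paper as a \emph{conjecture}, not a theorem: the authors give only partial progress (the reduction to $k=2$, and a proof of $M_{2}(m,n)\geq M_{2}(m+1,n)$ in the range $\lfloor n/2\rfloor\leq m\leq n$), explicitly leaving the full statement open.

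Your reduction from $k\geq 3$ to $k=2$ is correct and in fact more careful than the paper's one-line remark, since you verify that the anomalous $a=1$ contribution $M_{2}(0,1)-M_{2}(1,1)=-1$ is absorbed by $p_{k-2}(n)-p_{k-2}(n-1)\geq 0$ when $k\geq 3$. Your $\mathfrak{sl}_{2}$ decomposition is also sound and cleanly yields the within-parity inequalities $M_{2}(m,n)\geq M_{2}(m+2,n)$ for all $m\geq 0$; this is a pleasant observation, complementary to (but neither implying nor implied by) the paper's upper-half range.

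The gap is the cross-parity step, which you correctly flag as ``the most delicate'' but do not actually carry out. Your purported injection is only a sketch: the phrase ``bookmark $\lambda_{\ell}$ via the Frobenius-type difference $\mu'_{1}-\mu'_{2}$ on a suitable normalized subdomain'' does not define a map, and the two ``absorption moves'' you do write down cover only pairs $(\lambda,\emptyset)$ with $\lambda_{\ell}=1$ or $\ell(\lambda)=1$. A pair such as $((2,2),\emptyset)$, with $2$-crank $2$, lies in neither domain, and you have not said where the main map sends it; likewise nothing at all is specified when $\mu\neq\emptyset$. Constructing such an injection is exactly the open content of the conjecture. The equality cases visible in the paper's own table (for instance $M_{2}(1,4)=M_{2}(2,4)=3$) show that any valid injection must be a bijection on infinitely many fibres, so a loose description cannot suffice. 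As it stands, your proposal reproduces and mildly extends the paper's partial results, but it does not resolve the conjecture.
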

\begin{table}[tbp]\caption{A Table of values of $M_{2}(m,n)$}\label{Tab1}
\centering
\begin{tabular}{cccccccccccccc}
\hline
$n\setminus m$ &0 &1 &2 &3 &4 &5 &6 &~7 &~8 &~9 &10 &11 & 12\\
\hline
0 &1 & & & & & & & & & & &\\
1 &0 &1 & & & & & & & & & &\\
2 &1 &1 &1 & & & & & & & & &\\
3 &2 &2 &1 &1 & & & & & & & &\\
4 &4 &3 &3 &1 &1 & & & & & & &\\
5 &6 &6 &4 &3 &1 &1 & & & & & &\\
6 &11 &9 &8 &5 &3 &1 &1 & & & & &\\
7 &16 &16 &12 &9 &5 &3 &1 &1 & & & &\\
8 &27 &24 &21 &14 &10 &5 &3 &1 &1 & & &\\
9 &40 &39 &31 &25 &15 &10 &5 &3 &1 &1 & & &\\
10 &63 &59 &51 &37 &27 &15 &10 &5 &3 &1 &1 & &\\
11 &92 &90 &75 &60 &41 &28 &16 &10 &5 &3 &1 &1 &\\
12 &141 &131 &116 &90 &67 &43 &29 &16 &10 &5 &3 &1 &1 \\
\hline
\end{tabular}
\end{table}

Thanks to the symmetry \eqref{sym-kcrank}, we only need to prove
\begin{align}\label{monotone}
M_k(m,n)\geq M_k(m+1,n) \text{ for } 0\leq m\leq n.
\end{align}
Note that for $k\geq 3$, and fix $0\leq m \leq n$, we have $$M_{k}(m,n)=\sum_{j=m}^{n}M_{k-1}(m,j)p(n-j),$$ so it will suffice to prove this unimodality for $k=2$. Moreover, using two easily described maps we can prove that for $0\leq m \leq n$,
\begin{align*}
M_2(m,n)&\leq M_2(m,n+1),\\
M_2(i,i+m)&<M_2(i+1,i+m+1), \text{ for } 0\leq i \leq m-1,\\
M_2(m,2m)&=M_2(m+i,2m+i), \text{ for } i \geq 0.
\end{align*}
Consequently we establish ``half'' of \eqref{monotone}, i.e.,
$$M_k(m,n)\geq M_k(m+1,n) \text{ for } \lfloor n/2 \rfloor\leq m\leq n.$$

Unlike other properties shared by both crank and $k$-crank, this unimodality is not true for crank. For example,
\begin{align*}
M(n,n)=M(n-2,n)=1\quad~\textrm{and}~M(n-1,n)=0,\; \text{for all }n\geq 4.
\end{align*}

Lastly, we note that the asymptotic formula obtained by Bringmann and Manschot \cite[Corollary~1.3]{BM} makes it plausible to give a computer-aided proof of Conjecture~\ref{conj:unimodal}, but it would still be interesting to seek for a combinatorial proof.

\end{enumerate}

\section*{Acknowledgement}
Both authors were supported by the Fundamental Research Funds for the Central Universities (No.~CQDXWL-2014-Z004) and the National Natural Science Foundation of China (No.~11501061).


\begin{thebibliography}{99}

\bibitem{Andr1}G.~E.~Andrews, \emph{Problems and prospects for basic hypergeometric functions}, In: Askey, R. (ed.) Theory and Applications of Special Functions, pp. 191--224. Academic Press, New York (1975)

\bibitem{Andr2}G.E. Andrews, \emph{The Theory of Partitions}, Encyclopedia of Mathematics and Its Applications, Vol. 2 (G.-C. Rota, ed.), Addison-Wesley, Reading, 1976
    (Reprinted: Cambridge Univ. Press, London and New York, 1984).

\bibitem{Andr3}G.~E.~Andrews, \emph{Partitions, Durfee symbols, and the Atkin--Garvan moments of ranks}, Invent. math. {\bf 169} (2007), 37--73.

\bibitem{Andr4}G.~E.~Andrews, \emph{On a conjecture of Peter Borwein}, J. Symbolic Comput. {\bf 20} (1995): 487--501.

\bibitem{A-G}G. E. Andrews, F.G. Garvan, \emph{Dyson's crank of a partition}, Bull. Amer. Math. Soc. {\bf 18} (1988): 167--171.

\bibitem{AL}G. E. Andrews, R. Lewis, \emph{The ranks and cranks of partitions moduli 2, 3 and 4}, J. Number Theory. {\bf 85} (2000): 74--84.

\bibitem{AG}A. O. L. Atkin and F. G. Garvan, \emph{Relations between the ranks and cranks of partitions}, Ramanujan. J. {\bf 7} (2003): 343--366.

\bibitem{Ber} B.~C.~Berndt, \emph{Ramanujan's Notebooks, Part III} (Springer--Verlag, New York, 1991).

\bibitem{BG} A.~Berkovich and F.~G.~Garvan, \emph{Some observations on Dyson's new symmetries of partitions}, J. Combin. Theory Ser. A. {\bf 100} (2002): 61--93.

\bibitem{BD}K. Bringmann and J. Dousse, \emph{On Dyson's conjecture and the uniform asymptotic behavior of certain inverse theta functions}, Trans. Amer. Math. Soc. {\bf 368}(5) (2015): 3141--3155.

\bibitem{BM}K. Bringmann and J. Manschot, \emph{Asymptotic formulas for coefficients of inverse theta functions}, arXiv (2013). (\href{http://arXiv.org/abs/1304.7208}{arXiv:1304.7208}).

\bibitem{CM}S.~Chan and R.~Mao, \emph{The rank and crank of partitions modulo 3}, Int. J. Number Theory. \textbf{12}(4) (2016): 1027--1053.

\bibitem{Dys}F. J. Dyson, \emph{Some guesses in the theory of partitions}, Eureka (Reprinted: Selected Papers, Am. Math. Soc. Providence, pp. 51--56 (1996)). \textbf{8} (1944), 10--15.

\bibitem{Dys2}F. J. Dyson, \emph{Mappings and symmetries of partitions}, J. Combin. Theory Ser. A. \textbf{51} (1989): 169--180.

\bibitem{FT}S.~Fu and D.~Tang, \emph{Multiranks and classical theta functions}, arXiv (2016). (\href{http://arXiv.org/abs/1612.01141}{arXiv:1612.01141}).

\bibitem{Gar1}F. G. Garvan, \emph{New combinatorial interpretations of Ramanujan's partition congruences mod 5, 7, and 11}, Trans. Amer. Math. Soc. \textbf{305} (1988): 47--77.

\bibitem{Gar3}F. G. Garvan, \emph{The crank of partitions mod 8, 9 and 10}, Trans. Amer. Math. Soc. \textbf{322} (1990): 79--94.

\bibitem{Gar2}F. G. Garvan, \emph{Higher order spt-functions}, Adv. Math. \textbf{228} (2011): 241--265.

\bibitem{HL}P. Hammond and R. Lewis, \emph{Congruences in ordered pairs of partitions}, Int. J. Math. Math. Sci. 2004, nos. 45--48, 2509--2512.

\bibitem{JZ}Kathy Q.~Ji and Alice~X.~H.~Zhao, \emph{The crank moments weighted by the parity of cranks}, Ramanujan. J. to appear.

\bibitem{Kan}D.~M.~Kane, \emph{Resolution of a conjecture of Andrews and Lewis involving cranks of partitions}, Proc. Amer. Math. Soc. \textbf{132}(8) (2004): 2247--2256.

\bibitem{Kim}B.~Kim, \emph{Periodicity of signs of Fourier coefficients of eta quotients},  J. Math. Anal. Appl. \textbf{385} (2012): 998--1004.

\bibitem{Lew}R. Lewis, \emph{The ranks of partitions modulo 2}, Discrete. Math. \textbf{167/168} (1997): 445--449.

\bibitem{LS}R. Lewis and N. Santa-Gadea, \emph{On the rank and crank modulo 4 and 8}, Trans. Amer. Math. Soc. \textbf{341} (1994): 449--465.


\end{thebibliography}
\end{document}